\theoremstyle{plain}
\newtheorem{thm}{Theorem}[section]
\newtheorem{lemma}[thm]{Lemma}
\theoremstyle{definition}
\newtheorem{remark}{Remark}[section]
\numberwithin{equation}{section}
\newcommand{\ds}{\displaystyle}
\newcommand{\dsum}{\ds\sum}
\newcommand{\eqskip}{ \vspace*{2mm}\\ }
\newcommand{\R}{\mathbb{R}}
\newcommand{\N}{\mathbb{N}}
\newcommand{\fr}[2]{\frac{\ds #1}{\ds #2}}
\newcommand{\cdr}{\mathcal{C}_\ell}
\begin{document}

\title{Families of non-tiling domains satisfying P\'{o}lya's conjecture}

\author[P. Freitas]{Pedro Freitas}
\author[I. Salavessa]{Isabel Salavessa}
\address{Departamento de Matem\'{a}tica, Instituto Superior T\'{e}cnico, Universidade de Lisboa, Av. Rovisco Pais, 1049-001 Lisboa, Portugal
\& Grupo de F\'{\i}sica Matem\'{a}tica, Faculdade de Ci\^{e}ncias, Universidade de Lisboa, Campo Grande, Edif\'{\i}cio C6,
1749-016 Lisboa, Portugal}
\email{pedrodefreitas@tecnico.ulisboa.pt}
\address{Grupo de F\'{\i}sica Matem\'{a}tica, Faculdade de Ci\^{e}ncias, Universidade de Lisboa, Campo Grande, Edif\'{\i}cio C6,
1749-016 Lisboa, Portugal \& Departamento de F\'{\i}sica, Instituto Superior T\'{e}cnico, Universidade de Lisboa, Av. Rovisco Pais, 1049-001 Lisboa, Portugal}
\email{isabel.salavessa@tecnico.ulisboa.pt}

\date{\today}

\begin{abstract} We show the existence of classes of non-tiling domains satisfying P\'{o}lya's conjecture in any dmension, in both the Euclidean and
non-Euclidean cases.
This is a consequence of a more general observation asserting that if a domain satisfies P\'{o}lya's conjecture eventually, that is, for a sufficiently
large order of the eigenvalues, and may be partioned into $p$ non-overlapping isometric sub-domains, with $p$ arbitrarily large, then there exists
an order $p_{0}$ such that for $p$ larger than $p_{0}$ all such sub-domains satisfy P\'{o}lya's conjecture. In particular, this allows us to show that families
of sectors of domains of revolution with analytic boundary, and thin cylinders satisfy P\'{o}lya's conjecture, for instance. We also improve upon the Li-Yau
constant for general cylinders in the Dirichlet case.
\end{abstract}
\keywords{Laplace operator; eigenvalues; P\'{o}lya's conjecture}
\subjclass[2020]{\text{Primary: 35P15; Secondary: 35J05, 35J25, 35P20}}
\maketitle
%\twocolumn

\section{Introduction}

It is now more than sixty years since P\'{o}lya conjectured that Dirichlet and Neumann Laplace eigenvalues of Euclidean domains are bounded from below
and above, respectively, by the first term in the corresponding Weyl asymptotics~\cite{poly1}. More precisely, given a bounded domain $\Omega\subset\R^{n}$
and denoting these eigenvalues by
\[
 \hspace*{2cm} 0<\lambda_{1}(\Omega)\leq \lambda_{2}(\Omega)\leq \dots  \to \infty, \mbox{ as } k\to \infty \hspace*{2cm}\mbox{ [Dirichlet]}
\]
and
\[
 \hspace*{2cm} 0=\mu_{1}(\Omega)\leq \mu_{2}(\Omega)\leq \dots  \to \infty, \mbox{ as } k\to \infty, \hspace*{2cm}\mbox{ [Neumann]}
\]
P\'{o}lya's conjecture states that
\begin{equation}\label{polconj}
 \mu_{k+1}(\Omega) \leq \fr{4\pi^2}{\omega _{n}^{2/n}} \left(\fr{k}{|\Omega|}\right)^{2/n} \leq \lambda_{k}(\Omega),
\end{equation}
where $|\Omega|$ and $\omega_{n}$ denote, respectively, the $n-$measure of $\Omega$ and of the $n-$ball of unit radius.
A few years later P\'{o}lya went on to prove that indeed this is the case for the Dirichlet problem for domains which tile the plane~\cite{poly2} -- see~\cite{urak83},
for the extension of P\'{o}lya's result to $\R^{n}$. P\'{o}lya also proved in~\cite{poly2} that the result
holds in the Neumann case, but only for the more restricted class of {\it regularly plane covering} domains, with the general case having been
proved later by Kellner~\cite{kelln}.

Although the conjecture remains open to this day in both cases, it is at the origin of a number of results in the literature along several different
directions. One of the first was what may be called a proof-of-concept result by Lieb in~1980~\cite{lieb}, who showed that for general bounded domains
there exists an absolute constant $C = C_{n}$, depending only on the dimension, such that
\begin{equation}\label{polyaineq}
  \lambda_{k}(\Omega) \geq C_{n} \times\fr{4\pi^2}{\omega _{n}^{2/n}}\left(\fr{k}{|\Omega|}\right)^{2/n}.
\end{equation}
Three years later, Li and Yau~\cite{liyau} improved this result by showing that the constant $C_{n}$ above could be taken to equal
$
 \fr{n}{n+2}.
$
A Neumann counterpart to this result
was obtained by Kr\"{o}ger in $1992$~\cite{kroger}, who showed that 
\begin{equation}\label{kroger}
\mu_{k}\leq \left(\fr{n+2}{2}\right)^{2/n}\fr{4\pi^2}{(\omega_n)^{2/n}}\left(\fr{k-1}{|\Omega|}\right)^{2/n}.
\end{equation}
Other results with different constants may be found, for instance, in Urakawa's work~\cite{urak83}, where the {\it lattice packing density} of
a domain is used to measure how close to one the factor multiplying the constant in the Weyl asymptotics can be. 

Another approach consists in identifying specific constructions that produce new domains satisfying~\eqref{polconj}. This was done by Laptev
in~\cite{lapt}, who obtained the first examples of non-tiling domains in dimesions four or higher. More generally, he considered $\Omega$ to
be the Cartesian product between two domains, say $\Omega = \Omega_{1}\times\Omega_{2}\subset\R^{p}\times\R^{q}$, such that $p$ is larger than or
equal to two and satisfies P\'{o}lya's conjecture in the Dirichlet case. Then the same holds for the Cartesian product $\Omega$ -- see also~\cite{hs} for
further results for Neumann boundary conditions. We note that the case of cylinders given by the Cartesian product of a general domain and an interval
is still open.

More recently, a connection between this problem and that of determining optimal spectral domains was found by Colbois and El Soufi, who
proved an equivalence between P\'{o}lya's conjecture and the convergence of the sequence $k^{-2/n}\lambda_{k}^{*}$, where $\lambda_{k}^{*}$
denotes the smallest possible value of $\lambda_{k}(\Omega)$ for domains $\Omega$ with given $|\Omega|$~\cite{colbsouf} -- see also~\cite{frlapa}
for a similar result for domains which are unions of scaled copies of a single domain, and for references to other similar problems.

In this note we explore the second approach described above, providing a way of obtaining families of domains satisfying P\'{o}lya's conjecture
but which do not tile the space, and without the need to assume the existence of other such domains a priori. Another case where such a
domain has been shown to exist is in the recent proof for the important (and iconic) case of Euclidean balls~\cite{flps}.

% Note that Laptev's construction requires the space to have dimention at least four in order to obtain a non-tiling domain
% satisfying~\eqref{polconj}.

The idea behind our approach is quite simple, and may be thought of as replicating P\'{o}lya's argument to the case where instead of tiling the whole
space we start from a domain that may be tiled by an arbitrarily large number of isometric copies of a subdomain. It is quite straightforward
to see that, should the larger domain satisfy P\'{o}lya's conjecture, then the same is true for the smaller domain~\cite{fms}. However, and under
certain conditions, we are able to show that it is sufficient for P\'{o}lya's conjecture to hold for large enough eigenvalues of the original domain,
for this to imply that there is an order of division of that domain into tiling subdomains, after which they must satisfy P\'{o}lya's conjecture.
Using this approach, we are able to show that, for instance, planar circular sectors with a sufficiently small angle opening and sufficiently thin
cylinders satisfy inequality~\eqref{polconj}, as do certain sufficiently thin tiling subsets of solids of revolution in $\R^{n}$. A related idea had
been used previously by Hersch~\cite{hers} and also by the first author of the current paper~\cite{frei1}, but only as a way of obtaining bounds for
the first eigenvalue.

In the next section we formalise these ideas and establish the basic lemma which then allows us to provide several examples of non-tiling
domains satisfying P\'{o}lya's conjecture. We then consider the case of general cylinders in more detail, for which we substantially improve the constant
$C_{n} = n/(n+2)$ in~\eqref{polyaineq}.

\section{The basic lemma}

In order to proceed, it is convenient to have the following two definitions.

Given domains $\Omega,\Omega_{0}\subset\R^{n}$ and an integer $p$, we say that $\Omega$ is $p-$tiled by $\Omega_{0}$ if $\Omega$ is the
interior of the closure of the union of $p$ nonoverlapping isometric copies of $\Omega_{0}$.

We say that $\Omega$ satisfies P\'{o}lya's conjecture eventually (for Dirichlet eigenvalues), if there is an order $k_{0}$ such that
$\lambda_{k}$ satisfies the right-hand side inequality in~\eqref{polconj} for all $k$ larger than or equal to $k_{0}$. A similar definition
applies to the Neumann eigenvalues and the left-hand side inequality in~\eqref{polconj}. A class of domains which fall into this category are
those having a second positive (resp. negative) term in the Weyl asymptotics, for the Dirichlet (resp. Neumann) eigenvalues respectively. More
precisely, under certain geometric conditions in $\Omega$ we have the following two-term Weyl asymptotics~\cite[Theorem~1.6.1 and Example~1.6.16]{sava}
\begin{eqnarray}\label{2termweyl}
\lambda_{k}(\Omega) = \frac{\ds 4\pi^2k^{2/n}}{\ds \left(\omega_{n}|\Omega|\right)^{2/n}}
 + \frac{\ds 2\pi^2\omega_{n-1}|\partial\Omega|k^{1/n}}{\ds n \left(\omega_{n}|\Omega|\right)^{1+1/n}}+ {\rm o}(k^{1/n})
\end{eqnarray}
and
\begin{eqnarray}
 \mu_{k}(\Omega) = \frac{\ds 4\pi^2k^{2/n}}{\ds \left(\omega_{n}|\Omega|\right)^{2/n}}
 - \frac{\ds 2\pi^2\omega_{n-1}|\partial\Omega|k^{1/n}}{\ds n \left(\omega_{n}|\Omega|\right)^{1+1/n}}+ {\rm o}(k^{1/n}),\label{2termweylNeumann}
\end{eqnarray}
where $|\partial\Omega|$ denotes the $(n-1)-$measure of the boundary of $\Omega$.

\begin{lemma}\label{mainlem}
 Let $\Omega\subset \R^{n}$ be a bounded domain satisfying P\'{o}lya's conjecture eventually and assume that for each $p\in\N$ there exists a domain $\Omega_{p}$ such
 that $\Omega_{p}$ $p-$tiles  $\Omega$. Then there exists $p_{0}$ such that $\Omega_{p}$ satisfies
 P\'{o}lya's conjecture for the Dirichlet eigenvalues for all $p$ larger than $p_{0}$. A similar conclusion holds for the Neumann eigenvalues.
\end{lemma}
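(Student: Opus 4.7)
The plan is to combine two standard spectral comparison inequalities for the tiling $\Omega=\bigcup_{j=1}^{p}\sigma_{j}(\Omega_{p})$ with the hypothesis that $\Omega$ satisfies P\'{o}lya's conjecture eventually. For the Dirichlet direction I would construct $pk$ orthogonal test functions in $H^{1}_{0}(\Omega)$ by taking the first $k$ Dirichlet eigenfunctions on $\Omega_{p}$, transplanting each through every isometry $\sigma_{j}$, and extending by zero outside $\sigma_{j}(\Omega_{p})$. The supports are disjoint, giving orthogonality for free, and the largest Rayleigh quotient among them equals $\lambda_{k}(\Omega_{p})$; the min-max principle then yields
$$\lambda_{pk}(\Omega)\leq \lambda_{k}(\Omega_{p}).$$

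For the Neumann direction I would use Neumann bracketing along the interior interfaces of the tiling. Cutting along those interfaces decomposes the problem into $p$ independent copies of $\Omega_{p}$, whose combined spectrum is simply that of $\Omega_{p}$ with every eigenvalue repeated $p$ times. Bracketing places this list termwise below the Neumann spectrum of $\Omega$, and a quick index count locates $\mu_{k+1}(\Omega_{p})$ at position $kp+1$, so that
$$\mu_{k+1}(\Omega_{p})\leq \mu_{kp+1}(\Omega).$$

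I would close the argument using the volume scaling $|\Omega_{p}|=|\Omega|/p$. Letting $k_{0}$ be the threshold for the eventual P\'{o}lya property of $\Omega$, for every $k$ with $kp\geq k_{0}$ the Dirichlet chain becomes
$$\lambda_{k}(\Omega_{p})\geq \lambda_{pk}(\Omega)\geq \fr{4\pi^{2}}{\omega_{n}^{2/n}}\left(\fr{kp}{|\Omega|}\right)^{2/n}=\fr{4\pi^{2}}{\omega_{n}^{2/n}}\left(\fr{k}{|\Omega_{p}|}\right)^{2/n},$$
and the Neumann chain analogously produces the symmetric upper bound $\mu_{k+1}(\Omega_{p})\leq \frac{4\pi^{2}}{\omega_{n}^{2/n}}(k/|\Omega_{p}|)^{2/n}$. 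Taking $p_{0}=k_{0}$ forces $kp\geq k_{0}$ for every $k\geq 1$, so the full P\'{o}lya inequalities hold at every order for $\Omega_{p}$, in both the Dirichlet and Neumann settings.

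The argument has no serious obstacle, since min-max, Neumann bracketing, and the volume scaling are all standard. The only point requiring care is matching the directions of the two chains with the two halves of P\'{o}lya's conjecture and keeping track of indices in the Neumann bracketing, so that the eventual assumption on $\Omega$ begins to bind precisely at $k=1$ once $p$ is taken large enough.
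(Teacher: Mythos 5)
Your proof is correct and follows essentially the same route as the paper: the key inequalities $\lambda_{pk}(\Omega)\leq\lambda_{k}(\Omega_{p})$ and $\mu_{k+1}(\Omega_{p})\leq\mu_{kp+1}(\Omega)$ are exactly those of Lemmas~1 and~2 of P\'{o}lya~\cite{poly2} (which the paper cites rather than re-derives), and the conclusion via the eventual P\'{o}lya hypothesis and the scaling $|\Omega|=p|\Omega_{p}|$ is identical. The only cosmetic difference is that you prove the comparison inequalities directly by min--max and Neumann bracketing instead of quoting them.
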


\begin{proof} If $\Omega$ contains $p$ nonoverlapping copies of $\Omega_{p}$, it follows by Lemma~1 in~\cite{poly2} that
$\lambda_{kp}(\Omega) \leq \lambda_{k}\left(\Omega_{p}\right)$ for all positive integer $k$. Hence,
\[
 \lambda_{k}\left(\Omega_{p}\right) \geq \fr{4\pi^2}{\left(\omega_{n}|\Omega|\right)^{2/n}}\left(kp\right)^{2/n}
\]
for $kp$ sufficiently large, say $kp \geq k_{0}$. By taking $p\geq k_{0}$, we obtain that the above inequality is satisfied for all positive integer $k$. On
the other hand, $|\Omega| = p |\Omega_{p}|$, and so
\[
 \lambda_{k}\left(\Omega_{p}\right) \geq \fr{4\pi^2}{\left(\omega_{n}|\Omega_{p}|\right)^{2/n}}k^{2/n}
\]
for all $p\geq k_{0}$ and $k\in\N$, proving the result in the Dirichlet case.

Similarly, by Lemma~2 in~\cite{poly2} we have $\mu_{(k-1)p+1}(\Omega)\geq \mu_k(\Omega_p)$, for all $k\geq 1$, and assuming the Neumann eigenvalues
of $\Omega$ satisfy P\'{o}lya's inequality for $k\geq k'_0$, we obtain for $(k-1)p +1\geq k'_0$,
\begin{eqnarray*}
\mu_k(\Omega_p) &\leq&  \fr{4\pi^2}{(\omega_n|\Omega_p|)^{2/n}}(k-1)^{2/n}.
\end{eqnarray*}
If $k'_0=1$ we are done for any $p$ and $k\geq 1$. If $k'_0\geq 2$
 we  take $p\geq k'_0$ and  we see that P\'{o}lya's inequality holds on $\Omega_p$ for any $k\geq 2$. It also holds for $k=1$.
\end{proof}
\begin{remark}\label{remextension}
 The same proof may be used to obtain a similar result if $\Omega$ is a manifold with boundary -- see the examples in Sections~\ref{manifgeo} and~\ref{manifex}
 below, which also shows that the condition on the existence of the order $p_{0}$ cannot be dropped in general.
\end{remark}
\begin{remark}
 Clearly it is possible to state the above result for a specific value of $p_{0}$ for which the conditions are satisfied. However, and
 without the assumption that the tiling takes place for all $p$, it cannot be asserted that the conjecture is satisfied for all $p$
 larger than $p_{0}$, as it may happen that the domain $\Omega$ cannot be tiled for all such values of $p$. To see this, it is enough to
 consider a regular $n-$polygon which is tiled by $n$ equal triangles (which satisfy P\'{o}lya, as they are also tiling domains), but
 which, except for $4^kn$ or $2\times 4^kn$, with $k\in\N_{0}$, will not be tiled with similar triangles for other integers larger than $n$.
\end{remark}

\section{Examples}

We shall now use Lemma~\ref{mainlem} to provide some examples of non-tiling domains satisfying P\'{o}lya's conjecture.

\subsection{Sectors of domains of revolution}\label{revolution}
Let $\Omega$ be a convex domain of revolution around an axis in $\R^{n}$ and with analytic boundary. Then it satisfies
the nonperiodicity condition~\cite[Lemma 1.3.19]{sava}, and thus also P\'{o}lya's conjecture eventually. On the other
hand, $\Omega$ is clearly tiled by $p$ sectors of $\Omega$ with an angle opening $2\pi/p$, and Lemma~\ref{mainlem} may
then be applied to obtain that, for all $p$ sufficiently large, these sectors satisfy P\'{o}lya's conjecture.

In the particular case of an $n-$dimensional ball $B$, it can be $p-$tiled by circular sectors $S_{p}$ with the same radius
and angle opening $2\pi/p$, for any positive integer $p$,  as described in the previous example.
The proof that $D$ satisfies P\'{o}lya's conjecture allowed the authors in~\cite{flps} to deduce that the same will happen for any sector of this form
Although Lemma~\ref{mainlem} only yields the existence of sufficiently thin sectors for which P\'{o}lya's conjecture holds, say $S_{p_{0}}$, it does not require
the angle to be a rational multiple of $\pi$ and may be extended to sectors of shells.

More precisely, given any angle $\alpha$ in $(0,\pi]$, we may conclude that there exists $q=q_{\alpha}\in \N$ such that P\'{o}lya's conjecture holds for sectors
with opening angle $\alpha/(jq_{\alpha})$ for all $j\in\N$. That Weyl's two-term asymptotics~\eqref{2termweyl} are satisfied in these cases, stems from the fact that
such domains allow for separation of variables~\cite{netr}.

% Similar results may be obtained for planar annular sectors.

% By extending the argument to domains $p-$tiling a ball $B$ in $\R^{2n}$, such as sets of the form
% $ S_{p_1,\ldots, p_n} =C_{p_1}\times \ldots \times C_{p_n} \cap B$, 
% where for each $i$, $p_i\in \mathbb{N}$, $C_{p_i}$ is a sector of $\mathbb{R}^2$ of angle $2\pi/p_i$, 
% $C_{p_i}=\left \{ re^{i\varphi}: \varphi\in [0, {2\pi}/{p_i}], r\geq 0\right\}$, 
% and $p=p_1\times \ldots\times p_n$.
% For a ball $B$ in $\R^{2n+1}$, we may take  
% $S_{p_1,\ldots, p_n} =C_{p_1}\times \ldots C_{p_n}\times \mathbb{R}^* \cap B$, 
% where $\mathbb{R}^*$ can be either $\mathbb{R}$ or
% $\mathbb{R}^+$, giving a $p-$tilling in the first case
% and a $(p+1)-$tilling in the second one.

\subsection{Solid cylinders} Let $\Omega\subset\R^{n}$ be a domain satisfying the nonperiodicity condition (Definition 1.3.7
in~\cite{sava}), and consider the cylinder obtained by the cartesian product $\cdr=\Omega\times J\subset\R^{n+1}$, where $J$ is the interval
$[0,\ell]$. By considering the projection of the trajectories in $\cdr$ onto the basis $\Omega$ of the cylinder, it is not difficult to see that $\cdr$
also satisfies the nonperiodicity condition -- see also~\cite{mikh}, where the two-term asymptotics was established directly in the case when $n$ is two. By
Theorem~1.6.1 in~\cite{sava} (see also the Remark~1.6.2
following the theorem and Example 1.6.16), the cylinder $\cdr$ satisfies the two-term Weyl asymptotics~\eqref{2termweyl} and~\eqref{2termweylNeumann}, and
hence it satisfies P\'{o}lya's conjecture eventually. We thus obtain that for each of the Dirichlet and Neumann cases there exist values of $\ell_{p}$ 
sufficiently small such that the cylinder $\mathcal{C}_{\ell_{p}}$ satisfies P\'{o}lya's conjecture.\vspace*{5mm}

Lemma~\ref{mainlem} may also be applied non-Euclidean examples, where the main point is again whether or not the nonperiodicity condition is satisfied. However,
now there are known examples where it actually fails, such as hemispheres -- see~\cite[Example 1.3.16]{sava} and~\cite{fms}.

\subsection{Sectors of geodesic disks\label{manifgeo}}
Provided we can ensure both the nonblocking and nonperiodicity conditions, a similar approach to that of example~\ref{revolution} above may be used for sectors of
geodesic disks on manifolds. Concerning the former condition, this will hold for geodesic disks in both $\mathbb{S}^{n}$ and $\mathbb{H}^{n}$

The nonperiodicity condition will hold for strongly convex spherical caps of the standard $n-$dimensional sphere $\mathbb{S}^{n}$, that is, those that are
strictly contained in a hemisphere~\cite[Example 1.3.16]{sava}. We thus obtain that sufficiently thin sectors on $\mathbb{S}^{n}$ will satisfy P\'{o}lya's conjecture.
In the hyperbolic case, it is possible to derive from~\cite[Lemma 1.3.19]{sava} that the nonperiodicity condition holds for geodesic disks of any radius, and thus again the conjecture
will be satisfied for sufficiently thin sectors.

\subsection{Cylindrical surfaces\label{manifex}} This example shows that, in general, the condition that $p$ be large enough in Lemma~\ref{mainlem} cannot be dropped. Let 
$M$ be a closed $(n-1)-$manifold and assume the conditions in the lemma are satisfied for the domain $S_{\ell} = M\times J$, where again $J$ is the interval $[0,\ell]$ -- recall
that, as pointed out in Remark~\ref{remextension}, Lemma~\ref{mainlem} also holds in this case.
This means that $S_{\ell}$ may be sliced up into $p$ domains in the way described in the lemma, each being a cylindrical surface itself, of the form $S_{\ell/p}$. Then,
for large enough $p$, $S_{\ell/p}$ will satisfy P\'{o}lya's conjecture.

On the other hand, since $M$ is closed, its first eigenvalue is zero and the first eigenvalue of $S_{h}$ equals $\pi^{2}/h^{2}$. This eigenvalue will
satisfy P\'{o}lya's conjecture if and only if
\[
 \fr{\pi^2}{h^2} \geq \fr{4\pi^2}{\left(\omega_{n} h |M|\right)^{2/n}} \Leftrightarrow h \leq \fr{1}{2^{n/(n-1)}}\left( \omega_{n} |M| \right)^{1/(n-1)}.
\]
We thus see that, given a value of $\ell$, unless the height $h=\ell/p$ is small enough, the first eigenvalue will not satisfy the above inequality.

A specific example may be obtained by considering the two-dimensional cylindrical surface $S_{\ell}=\mathbb{S}^{1}\times [0,\ell]$, where $\mathbb{S}^{1}$ denotes the unit
circle. This satisfies the necessary conditions for the two-term Weyl asymptotics given by~\eqref{2termweyl} to hold, and thus, given large enough
$\ell$, $S_{\ell}$ does not satisfy P\'{o}lya's conjecture, while $S_{\ell/p}$ will do so, provided $p$ is a sufficiently large integer.

\section{Improved constants for Cylinders}
The results in~\cite{lapt} require the space dimension of the domain satisfying P\'{o}lya's conjecture to be at least two, and thus do not apply to
general cylinders of the form $\cdr = \Omega \times [0, \ell]\subset\R^{n+1}$, where $\Omega$ is a bounded domain in $\mathbb{R}^n$
and $\ell$ a positive real number -- note that the interval satisfies P\'{o}lya's inequality which in this case becomes an identity for all eigenvalues.
Thus, and to the best of our knowledge, the best result so far in this case is the Li and Yau bound~\eqref{polyaineq} with constant given by~$(n+1)/(n+3)$.
This is a consequence of a result for the sum of the first $k$ eigenvalues, namely, that Dirichlet eigenvalues of an Euclidean
domain $\Omega$ in $\R^{n}$ satisfy
\begin{eqnarray} \label{LiYauI}
\fr{1}{k}\left(\sum_{j=1}^k\lambda_j(\Omega)\right)\geq \beta_{n}\fr{4\pi^2k^{2/n}}{(\omega_{n}|\Omega|)^{2/n}},
\quad \mbox{with}\quad \beta_{n}:=\fr{n}{n+2}.
\end{eqnarray}
which is asymptotically sharp as $k$ goes to infinity -- as was mentioned in~\cite{lapt}, this inequality is also related to a previous result by Berezin~\cite{bere}.
In the next result we use~\eqref{LiYauI} to improve the constant $C_{n}$ in~\eqref{polyaineq} for general cylinders.

\begin{thm}\label{TheoB} Consider the cylinder $\cdr = \Omega \times [0, \ell]\subset\R^{n+1}$, where $\Omega$ is a bounded domain in $\mathbb{R}^n$,
and $\ell$ any positive real number. Then its Dirichlet eigenvalues satisfy
\begin{equation}\label{improve}
\lambda_{k}(\cdr)\geq \alpha_{n+1} \left(\fr{4\pi^2}{\omega_{n+1}^{2/(n+1)}}\right)\left(\fr{k}{\ell |\Omega|}\right)^{2/(n+1)}, 
\end{equation}
where 
\begin{equation}\label{Freitas-constant}
\alpha_{n}:= \fr{n}{(n+1)^{1-\frac{1}{n}}} \left(\frac{\pi }{4}\right)^{1/n} \left[\frac{\Gamma \left(\frac{n+1}{2}\right)}{\Gamma \left(\frac{n+2}{2}\right)}\right]^{2/n}.
\end{equation}
For any $n\geq 2$, the coefficients $\alpha_{n}$ satisfy
\[
\beta_{n+1} < \fr{n+1}{n+2}\left(\fr{\pi}{2}\times\fr{n+3}{n+2}\right)^{1/(n+1)} < \alpha_{n+1} <
\fr{n+1}{n+2}\left(\fr{\pi}{2}\times\fr{n+2}{n+1}\right)^{1/(n+1)} <1, %\beta_n^{n/(n+1)}
\] 
where $\beta_{n}$ denotes the Berezin-Li-Yau coefficients defined in~\eqref{LiYauI}.
 \end{thm}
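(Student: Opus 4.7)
My plan is to combine the \emph{exact} Dirichlet spectrum of the interval $[0,\ell]$ with the Berezin (trace) form of \eqref{LiYauI} on the cross-section $\Omega$, and to squeeze out $\alpha_{n+1}$ via a shift-and-optimise argument rather than the usual Li-Yau-Legendre step. Since $\mathcal{C}_\ell$ is a Cartesian product, its Dirichlet eigenvalues are exactly $\{\lambda_i(\Omega)+\pi^2 j^2/\ell^2 : i,j\geq 1\}$, so using the sharp counting bound $N_{[0,\ell]}(\mu)\leq \ell\sqrt{\mu}/\pi$ on the interval factor yields
\[
 N_{\mathcal{C}_\ell}(\Lambda)\;=\;\sum_{i\geq 1} N_{[0,\ell]}\!\bigl((\Lambda-\lambda_i(\Omega))_+\bigr)\;\leq\;\frac{\ell}{\pi}\,R^{\Omega}_{1/2}(\Lambda),\qquad R^{\Omega}_{1/2}(\Lambda):=\sum_{i\geq 1}(\Lambda-\lambda_i(\Omega))_+^{1/2}.
\]
The whole problem thus reduces to a good upper bound on the half-order Riesz mean $R^{\Omega}_{1/2}$ of the cross-section.

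The central trick is to control $R^{\Omega}_{1/2}$ by a shift of $R^{\Omega}_1$ through AM-GM: for any $\tau>0$, applying $a+\tau\geq 2\sqrt{\tau a}$ termwise to $a=(\Lambda-\lambda_i(\Omega))_+$ gives
\[
 R^{\Omega}_1(\Lambda+\tau)\;\geq\;2\sqrt{\tau}\,R^{\Omega}_{1/2}(\Lambda),
\]
and the Berezin inequality (the Legendre dual of \eqref{LiYauI}) supplies $R^{\Omega}_1(\mu)\leq \frac{2\omega_n|\Omega|}{(2\pi)^n(n+2)}\mu^{(n+2)/2}$. Minimising the resulting bound over $\tau>0$ (a routine calculation gives the optimum $\tau^{\ast}=\Lambda/(n+1)$) yields
\[
 R^{\Omega}_{1/2}(\Lambda)\;\leq\;\frac{\omega_n\,|\Omega|\,(n+2)^{n/2}}{(2\pi)^n\,(n+1)^{(n+1)/2}}\,\Lambda^{(n+1)/2}.
\]

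Inserting this into the counting-function estimate and inverting $N_{\mathcal{C}_\ell}(\lambda_k(\mathcal{C}_\ell))\geq k$ produces a lower bound of the form $c_n\,(k/(\ell|\Omega|))^{2/(n+1)}$ with an explicit constant $c_n$. A bookkeeping calculation using $\omega_n=\pi^{n/2}/\Gamma((n+2)/2)$ and $\omega_{n+1}=\pi^{(n+1)/2}/\Gamma((n+3)/2)$ rearranges $c_n$ into precisely $\alpha_{n+1}\cdot 4\pi^2/\omega_{n+1}^{2/(n+1)}$, establishing \eqref{improve}. The comparison chain then follows by direct manipulation of the $\alpha_{n+1}$ formula, using the recurrence $\Gamma((n+3)/2)=\tfrac{n+1}{2}\Gamma((n+1)/2)$ together with Gautschi-type estimates on the ratio $\Gamma((n+2)/2)/\Gamma((n+3)/2)\sim\sqrt{2/(n+1)}$; this is purely elementary.

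The main conceptual step — and the reason the result beats the $\beta_{n+1}$ one would obtain by applying \eqref{LiYauI} directly on $\mathcal{C}_\ell$ — is the choice to preserve the sharp Berezin bound on $\Omega$ all the way to the final step, exploiting the \emph{exact} interval counting function (not a Weyl estimate) together with the AM-GM lift from $R^{\Omega}_1$ to $R^{\Omega}_{1/2}$. The more obvious routes, such as applying Berezin or \eqref{LiYauI} directly on $\mathcal{C}_\ell$, or applying a Li-Yau counting-function bound on $\Omega$ and controlling $\sum_j(\Lambda-\pi^2 j^2/\ell^2)_+^{n/2}$ by an integral, all pay an extra $(1+2/n)^{n/2}$ factor (or its equivalent) and only recover $\beta_{n+1}$. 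Identifying this particular route is the only non-mechanical part of the argument; once it is in place, everything else is optimisation and careful tracking of constants.
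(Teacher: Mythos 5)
Your proof is correct and reproduces exactly the constant $\alpha_{n+1}$ of the paper, but the engine you use is different --- essentially the Legendre dual of the one in the paper. Both arguments start identically: exploit the product structure and the exact interval spectrum to write $\mathcal{N}_{\mathcal{C}_\ell}(\lambda)\le\frac{\ell}{\pi}\sum_j(\lambda-\eta_j)_+^{1/2}$, where $\eta_j$ are the Dirichlet eigenvalues of $\Omega$. The paper then applies Cauchy--Schwarz to the finite sum, $\sum_{j\le j_\lambda}\sqrt{\lambda-\eta_j}\le\sqrt{j_\lambda}\,\bigl(\lambda j_\lambda-\sum_{j\le j_\lambda}\eta_j\bigr)^{1/2}$, invokes the Li--Yau \emph{sum} form of \eqref{LiYauI} for $\sum\eta_j$, and then minimises the resulting two-term expression $\frac{\pi^2k^2}{\ell^2 j_\lambda^2}+\beta_n\frac{4\pi^2 j_\lambda^{2/n}}{(\omega_n|\Omega|)^{2/n}}$ over $j_\lambda^2$. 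You instead lift $R^\Omega_{1/2}$ to $R^\Omega_1$ via the AM--GM shift $R^\Omega_{1/2}(\Lambda)\le(2\sqrt{\tau})^{-1}R^\Omega_1(\Lambda+\tau)$, insert the Berezin \emph{Riesz-mean} inequality (which is the Legendre transform of \eqref{LiYauI}), and optimise over the shift, finding $\tau^*=\Lambda/(n+1)$. The two optimisations are dual and yield numerically the same bound, as your bookkeeping confirms. A small presentational gain on your side is that $\tau$ is genuinely free throughout; in the paper, $j_\lambda$ is introduced as the spectral index of the last $\eta_j$ below $\lambda$ and only afterwards treated as a free minimisation variable, which is valid (any choice gives a lower bound) but slightly less transparent. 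For the inequality chain $\beta_{n+1}<\cdots<\alpha_{n+1}<\cdots<1$, both arguments reduce to two-sided bounds on a gamma-function ratio; the paper uses Wendel's inequalities, which are of the same Gautschi type you invoke, so there is no substantive difference there.
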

 \begin{proof}
 Denoting the eigenvalues of $\Omega$ in increasing order and repeated according to multiplicities by $\eta_j$, and those of the interval $J=[0,\ell]$ by $\rho_{l} = \pi^2l^2/\ell^2$,
 the eigenvalues of the Cartesian product $\cdr=\Omega \times [0,\ell]$, are given by $\eta_j+\rho_l$. Given $\lambda >0$, let  $j_{\lambda}$ be the largest integer such that
$ \lambda -\eta_{j_{\lambda}}\geq 0$. Then we have
\begin{eqnarray}
\mathcal{N}_{\Omega \times J}(\lambda)&=& \#\{(j,l): \eta_j +\rho_l\leq \lambda\}\nonumber\eqskip
  & = &  \#\{(j,l): \rho_l \leq \lambda-\eta_j\} \nonumber\eqskip
  & = & \dsum_{j} \#\{l: \rho_l \leq (\lambda-\eta_j)_+\} \nonumber\eqskip
  & = & \dsum_{j=1}^{j_{\lambda}}\mathcal{N}_{J}((\lambda-\eta_j))\nonumber\eqskip
  & = & \dsum_{j=1}^{j_{\lambda}}\left\lfloor \fr{\ell}{\pi}\sqrt{\lambda -\eta_j}\right\rfloor\nonumber\eqskip
  & \leq &  \dsum_{j=1}^{j_{\lambda}} \fr{\ell}{\pi}\sqrt{\lambda -\eta_j}. \label{NMJ}
\end{eqnarray}
Using the Cauchy-Schwarz inequality we obtain
\begin{eqnarray}
\mathcal{N}_{\Omega \times J}(\lambda) & \leq & \fr{\ell}{\pi}
\sqrt{j_{\lambda}}\left( \sum_{j=1}^{j_{\lambda}}(\lambda -\eta_j)\right)^{1/2}\nonumber\eqskip
& = & \fr{\ell}{\pi} \sqrt{j_{\lambda}}\left( \lambda j_{\lambda}- \sum_{j=1}^{j_{\lambda}}\eta_j\right)^{1/2}\label{ineq1}.
\end{eqnarray}
The Berezin-Li-Yau inequality~\eqref{LiYauI} applied to the eigenvalues of $\Omega$ reads 
\[
 \sum_{j=1}^{k}\eta_j\geq \beta_n\,
\fr{4\pi^2 k^{2/n +1}}{(\omega_n |\Omega|)^{2/n}},
\]
and plugging this in~\eqref{ineq1} then yields
\begin{eqnarray*}
k:=\mathcal{N}_{\Omega \times J}(\lambda)&\leq& \fr{\ell j_{\lambda}}{\pi}
\left( \lambda -\beta_n \,\fr{4\pi^2j_{\lambda}^{2/n}}{(\omega_n |\Omega|)^{2/n}}\right)^{1/2}.
\end{eqnarray*}
Thus, for $\lambda=\lambda_k$, we have
\begin{equation}\label{KEY}
\lambda_k \geq \fr{\pi^2k^2}{\ell^2j_{\lambda}^2}+\beta_n \fr{4\pi^2j_{\lambda}^{2/n}}{(\omega_n|\Omega|)^{2/n}}.
\end{equation}
The right-hand side may now be viewed as a function on $j_{\lambda}^2$, having its minimum at
\[
 j_{\lambda}^2=\left(\fr{n k^2}{4\ell^2 \beta_n}\right)^{n/(n+1)} (\omega_n|\Omega|)^{2/(n+1)}.
\]
Plugging this back in~\eqref{KEY} we obtain the following lower bound for $\lambda_{k}$
\begin{eqnarray*}
\lambda_{k} &\geq & \fr{\pi^2 k^2}{\ell^2} \left(\fr{4\beta_n\ell^2}{n k^2}\right)^{n/(n+1)}\fr{1}{\left(\omega_{n}|\Omega|\right)^{2/(n+1)}}\eqskip
& & \hspace*{5mm}+\ 4\pi^2 \beta_{n} \left(\fr{n k^{2}}{4\ell^{2}\beta_{n}}\right)^{1/(n+1)}\fr{1}{\left(\omega_{n}|\Omega|\right)^{2/(n+1)}}\eqskip
&=& \fr{\left(4\beta_{n}\right)^{n/(n+1)}\pi^2}{\left( \omega_{n}\ell |\Omega|\right)^{2/(n+1)}}\left( \fr{1}{n^{n/(n+1)}} + n^{1/(n+1)} \right)
k^{2/(n+1)}.
\end{eqnarray*}
Finally, since the cylinder is now in $\R^{n+1}$, and recalling that
\[
 \omega_{n+1}=\omega_{n}\sqrt{\pi}\, \fr{\Gamma\left(\fr{n}{2}+1\right)}{\Gamma\left(\fr{(n+1)}{2}+1\right)},
\]
we obtain, by replacing for $\omega_{n}$ in the previous expression,
\begin{eqnarray*}
\lambda_k & \geq & \fr{\left(4\beta_{n}\right)^{n/(n+1)}\pi^2}{\left( \omega_{n+1}\ell |\Omega|\right)^{2/(n+1)}}
\left[\sqrt{\pi}\, \fr{\Gamma\left(\fr{n}{2}+1\right)}{\Gamma\left(\fr{(n+1)}{2}+1\right)}\right]^{2/(n+1)}
\left( \fr{1}{n^{n/(n+1)}} + n^{1/(n+1)} \right)
k^{2/(n+1)}\eqskip
& = & \fr{n+1}{(n+2)^{1-1/(n+1)}}\left(\fr{\pi}{4}\right)^{1/(n+1)}
\left[\fr{\Gamma\left(\fr{n}{2}+1\right)}{\Gamma\left(\fr{(n+1)}{2}+1\right)}\right]^{2/(n+1)}
\fr{4\pi^2}{\left( \omega_{n+1}\ell |\Omega|\right)^{2/(n+1)}}\ k^{2/(n+1)}\eqskip
& = & \alpha_{n+1} \fr{4\pi^2}{\left( \omega_{n+1}\ell |\Omega|\right)^{2/(n+1)}}\ k^{2/(n+1)}
\end{eqnarray*}
as desired.

It remains to prove the inequalities in the second part of the theorem. The left- and right-most inequalities follow in a straightforward way by algebraic manipulation.
To prove the upper and lower bounds for the coefficients $\alpha_{n}$, we make use of Wendel's inequalities for quotients of gamma functions~\cite{wend}, which in this case read as
\[
 \fr{1}{\left( \fr{n+1}{2}\right)^{1/2}} \leq \fr{\Gamma\left(\fr{n+1}{2}\right)}{\Gamma\left(\fr{n}{2}+1\right)} \leq \fr{ \left(\fr{n}{2}+1\right)^{1/2}}{\fr{n+1}{2}}.
\]
Replacing these in the expression for $\alpha_{n}$ yields the two inequalities.
 \end{proof}
 
% \begin{remark}
%  The fact that $\Omega$ is an Euclidean domain is not important here, as long as inequality~\eqref{LiYauI} holds, in which case the proof may then be extended to
%  the Cartesian product of a manifold with an interval. 
% \end{remark}

\begin{remark}
 The first two terms in the asymptotic behaviour of the coefficients $\alpha_{n}$ defined in Theorem~\ref{TheoB} are given by $1-\fr{1-\log(\pi/2)}{n}$, which may still
 be seen to be smaller than $\alpha_{n+1}$, while remaining larger than $\beta_{n+1}$.
\end{remark}

 \begin{remark}
  As we see from the last part of the result, our constant is larger than the corresponding Berezin-Li-Yau constant $\beta_{n}$. For
  instance, when $n=2$ we obtain $\alpha_{3}= (3/4)^{1/3} \approx 0.90856$ while the corresponding $\beta_{3}$ equals $3/5$.
\end{remark}

\begin{remark}
The essential property of the domain $\Omega$ which is used in the proof of Theorem~\ref{TheoB} is that it satisfies
the Berezin-Li-Yau estimate~\eqref{LiYauI}. Provided this is the case, a similar proof may be used to obtain the same result
when $\Omega$ is a domain (with boundary) on a manifold.
\end{remark}
 
\section*{Acknowledgements} We would like to thank Volodymyr Mikhailets for pointing out reference~\cite{mikh} to us.
This work was partially supported by the Funda\c c\~{a}o para a Ci\^{e}ncia e a Tecnologia (Portugal) through project UIDB/00208/2020.

\end{document}